\newtheorem{theorem}{Theorem}[section]
\newtheorem{lemma}[theorem]{Lemma}
\newtheorem{corollary}[theorem]{Corollary}
\theoremstyle{definition}
\newtheorem{definition}[theorem]{Definition}
\newtheorem{example}[theorem]{Example}
\newtheorem{remark}[theorem]{Remark}
\newcommand{\bA}{\mathbb{A}}
\newcommand{\bC}{\mathbb{C}}
\newcommand{\bG}{\mathbb{G}}
\newcommand{\bN}{\mathbb{N}}
\newcommand{\sA}{\mathscr{A}}
\newcommand{\Ell}{\mathrm{Ell}}
\newcommand{\id}{\mathrm{id}}
\DeclareMathOperator{\Sing}{Sing}
\begin{document}

\title{Surjective morphisms onto subelliptic varieties}
\author{Yuta Kusakabe}
\address{Department of Mathematics, Graduate School of Science, Kyoto University, Kyoto 606-8502, Japan}
\email{kusakabe@math.kyoto-u.ac.jp}
\subjclass[2020]{Primary 14R10, 32Q56. Secondary 14M20, 32E30}
\keywords{subellipticity, flexibility, affine space, jet interpolation, Oka manifold}

% 14M20: Rational and unirational varieties
% 14R10: Affine spaces (automorphisms, embeddings, exotic structures, cancellation problem)
% 32E30: Holomorphic, polynomial and rational approximation, and interpolation in several complex variables; Runge pairs
% 32Q56: Oka principle and Oka manifolds

\begin{abstract}
    We prove that every smooth subelliptic variety admits a surjective morphism from an affine space.
    This result gives partial answers to the questions of Arzhantsev and Forstneri\v{c}.
    As an application, we characterize open images of morphisms between affine spaces.
    We also obtain the jet interpolation theorem for morphisms from zero-dimensional subschemes of affine varieties to smooth subelliptic varieties.
\end{abstract}

\maketitle

%%%%%%%%%%%%%%%%%%%%%%%%%%%%%%%%%%%%%%%%%%
%
% Introduction
%
%%%%%%%%%%%%%%%%%%%%%%%%%%%%%%%%%%%%%%%%%%

\section{Introduction}
\label{section:introduction}

In the context of Oka theory, Gromov \cite{Gromov1986, Gromov1989} introduced several ellipticity conditions for complex manifolds and algebraic varieties (cf. \cite{Forstneric2017,Forstnerich}).
Roughly speaking, subellipticity for algebraic varieties means the existence of many dominant morphisms from affine spaces $\bA^n$.
The following Gromov's condition $\Ell_1$ is one of the characterizations of smooth subelliptic varieties (Definition \ref{definition:ellipticity}).

\begin{definition}
    \label{definition:Ell_1}
    A smooth variety\footnote{Throughout this paper, for simplicity, we work in the category of (irreducible) algebraic varieties over $\bC$ unless otherwise stated.} $Y$ satisfies $\Ell_1$ if for any affine variety $X$ and any morphism $f:X\to Y$ there exist $n\in\bN$ and a morphism $s:X\times\bA^n\to Y$ such that $s(x,0)=f(x)$ and $s(x,\cdot):\bA^n\to Y$ is smooth at $0\in\bA^n$ for each $x\in X$.
\end{definition}

The most typical examples are smooth locally flexible varieties (Definition \ref{definition:flexibility}).
This class includes smooth nondegenerate toric varieties, varieties covered by affine spaces and certain homogeneous spaces (Example \ref{example:flexibility}).

If we replace in Definition \ref{definition:Ell_1} morphisms from affine varieties by holomorphic maps from Stein spaces, we obtain the characterization of Oka manifolds \cite{Kusakabe2021a, Kusakabe2021b}.
By Gromov's Oka principle \cite{Forstneric2002a,Forstneric2010,Gromov1989}, the analytification of a smooth subelliptic variety is Oka.
Forstneri\v{c} \cite{Forstneric2017a} proved every (connected) Oka manifold admits a surjective holomorphic map from an affine space.
Thus, it is natural to ask whether every smooth subelliptic variety admits a surjective morphism from an affine space.

In this paper, we give the following answer to the above problem.

\begin{theorem}
    \label{theorem:surjective}
    For any smooth subelliptic variety $Y$, there exists a surjective morphism $f:\bA^{\dim Y+1}\to Y$ such that $f(\bA^{\dim Y+1}\setminus\Sing(f))=Y$ where $\Sing(f)=\{x\in\bA^{\dim Y+1}:\mbox{$f$ {\rm is not smooth at} $x$}\}$ is the singular locus of $f$.
\end{theorem}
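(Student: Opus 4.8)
The plan is to deduce everything from the characterization $\Ell_1$ of Definition \ref{definition:Ell_1}, which, applied with $X$ ranging over affine varieties, supplies on $Y$ a \emph{dominating spray}: a vector bundle $\pi\colon E\to Y$ of some rank $N$ together with a morphism $s\colon E\to Y$ satisfying $s(0_y)=y$ for every $y$ and smooth along the zero section, so that each restriction $s|_{E_y}\colon\bA^N\to Y$ is dominant. First I would record the two inputs this produces. On one hand $s$ already gives a surjection $E\to Y$ carrying the zero section identically onto $Y$, and along that section $s$ is a submersion; on the other hand, applying $\Ell_1$ to a point $y_0\in Y$ yields a morphism $\bA^m\to Y$ whose differential is surjective at $0$, hence dominant, from which a general affine-linear restriction produces a dominant morphism $g\colon\bA^{\dim Y}\to Y$ with dense constructible image $V$ and proper closed complement $C=Y\setminus V$ of dimension $<\dim Y$.

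Next I would splice these together. Pulling back $E$ along $g$ gives a vector bundle on $\bA^{\dim Y}$, which is trivial by the Quillen--Suslin theorem, so the composed spray becomes a morphism $\tilde s\colon\bA^{\dim Y}\times\bA^N\to Y$, $(x,v)\mapsto s\big(g(x),v\big)$, with $\tilde s(x,0)=g(x)$ and $\tilde s$ submersive along $\bA^{\dim Y}\times\{0\}$. The key point to establish is that $\tilde s$ is not merely dominant but \emph{surjective}, and already surjective from its smooth locus: for $c\in Y$ this amounts to showing that the set of base points $y$ with $c\in s(E_y)$ meets the dense set $V$, and can be met at a parameter where $\tilde s$ is a submersion. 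I would prove this by arranging the spray to be \emph{reachable}, i.e.\ that $\pi(s^{-1}(c))$ is dense in $Y$ for every $c$; this is obtained by replacing $s$ with a finite iterate (a composition of the dominating spray with itself), after which a dominant-morphism and generic-smoothness argument shows every target is hit from a dense, generically submersive set of base points. Granting reachability, $V$ dense forces $c\in\tilde s(\bA^{\dim Y}\times\bA^N)$, and genericity places the preimage in the smooth locus, so $\tilde s$ is a smooth-surjective morphism from $\bA^{\dim Y+N}$.

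It remains to cut the fibre dimension down from $N$ to $1$, and this is where I expect the real difficulty to lie. Restricting the fibre coordinate $v\in\bA^N$ to a general affine line keeps $\tilde s$ dominant by Bertini, but dominance is not enough: the $(\dim Y{+}1)$-dimensional family so obtained must still cover the lower-dimensional strata of $Y$, and a naive generic slice will in general miss part of the closed set $C$. The mechanism I would use to overcome this is the \emph{moving missing set}: for a one-parameter subfamily $F_t=\tilde s(\,\cdot\,,\gamma(t))$ the uncovered loci $C_t=Y\setminus\operatorname{im}(F_t)$ are proper closed subsets that translate with $t$, and the goal is to choose the line $\gamma\colon\bA^1\to\bA^N$ so that $\bigcap_{t\in\bA^1}C_t=\varnothing$ --- exactly as, for $Y=\bP^1$, the family $[\,s:1-st\,]$ has missing point $[\,1:-t\,]$ sweeping out all of $\bP^1$ as $t$ varies. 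To force the intersection empty for all of $Y$ I would argue by Noetherian induction along a stratification of $Y$: using reachability together with the submersivity of $s$ along the zero section to move the missing locus transversally off each stratum, one shows the uncovered set drops in dimension at each stage and is exhausted after finitely many steps, the single extra parameter $\bA^1$ providing precisely the room to sweep. Assembling the slice then produces the desired $f\colon\bA^{\dim Y+1}\to Y$, surjective and with $f(\bA^{\dim Y+1}\setminus\Sing(f))=Y$.
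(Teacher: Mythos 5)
Your outline reproduces the paper's broad strategy (a dominant map $g$ from an affine space covering a dense open set, a spray to reach the remaining points, then a cut down to dimension $\dim Y+1$), but the two steps that carry the real content are not proved, and one is set up on a wrong hypothesis. First, you work throughout with a single dominating \emph{vector-bundle} spray $(E,\pi,s)$ over $\id_Y$; that is ellipticity, not subellipticity. A subelliptic $Y$ only provides a dominating \emph{family} of sprays, and the composed spray one is forced to use has no vector bundle structure, so the Quillen--Suslin trivialization of $g^*E$ is unavailable; the paper replaces it by an iterated Serre Theorem A argument (Lemma \ref{lemma:Serre}) producing a surjective smooth morphism $\bA^{\dim Y}\times\bA^m\to g^*E$ preserving the zero section. (Note also that $\Ell_1$ cannot be applied to $\id_Y$ itself, since $Y$ is not assumed affine.) Second, and more seriously, your proof of ``reachability'' --- that $\pi(s^{-1}(c))$ is dense for every $c$, which is exactly where surjectivity lives --- is circular: after iterating the spray, to reach $c$ from $y$ you need the dense set $s(E_y)$ to meet $\pi(s^{-1}(c))$, which already requires $\pi(s^{-1}(c))$ to be dense (or at least to have nonempty interior), the very statement being proved. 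Dominance plus generic smoothness only yields a dense \emph{open} set of reached targets, never a prescribed target $c$. The missing ingredient is the paper's dimension count at the zero section: $(p,s):E\to Y\times Y$ is smooth at $0_c$, so $s^{-1}(c)$ has local dimension $\dim E-\dim Y$ at $0_c$, strictly larger than it could have if it were contained in $p^{-1}(Y\setminus U)\cup\Sing(p,s)$; hence $s^{-1}(c)$ meets $p^{-1}(U)$ at points where $s$ is smooth. (Equivalently: the image of the smooth locus of $(p,s)$ is open and contains the diagonal, so $\pi\bigl(s^{-1}(c)\setminus\Sing(p,s)\bigr)$ contains a nonempty open set.) Nothing in your sketch supplies this step.

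The final reduction from $\bA^{\dim Y+N}$ to $\bA^{\dim Y+1}$ also does not work as you set it up. Your family $F_t=\tilde s(\cdot,\gamma(t))$ moves only the fiber coordinate: even if $\gamma$ passes through a point $v$ for which some $(x,v)$ is a smooth point of $\tilde s$ mapping to $c$, the map $(x,t)\mapsto\tilde s(x,\gamma(t))$ need not be smooth at $(x,t)$, because its differential only sees $\dim Y+1$ of the $\dim Y+N$ available directions; so the covering-at-smooth-points property can fail along such slices, and your Noetherian-induction claim that the uncovered locus ``drops in dimension at each stage'' is never substantiated. The paper's mechanism is different and simpler: for each $y\in Y$ pick a point where $h$ is smooth and maps to $y$, and a \emph{generic} $d$-dimensional affine subspace $H_y\subset\bA^{n+m}$ through it (where $d=\dim Y$); genericity restores transversality to the fibers of $h$, so $h|_{H_y}$ is smooth there and $y\in h\bigl(H_y\setminus\Sing(h|_{H_y})\bigr)$. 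These images are open (images of smooth morphisms are open), so quasi-compactness of $Y$ gives finitely many subspaces $H_{y_1},\ldots,H_{y_l}$ whose images cover $Y$; Lagrange interpolation over distinct points $x_1,\ldots,x_l\in\bA^1$ then yields a single morphism $\psi:\bA^{d+1}\to\bA^{n+m}$ with $\psi|_{\bA^d\times\{x_j\}}$ an affine isomorphism onto $H_{y_j}$, and $f=h\circ\psi$ finishes the proof. Both the finiteness (from quasi-compactness and openness) and the transversality of generic linear slices are essential, and both are absent from your proposal.
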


In fact, Forstneri\v{c} \cite[Theorem 1.6]{Forstneric2017a} also proved that every smooth \emph{proper} subelliptic variety $Y$ admits a morphism $f:\bA^{\dim Y}\to Y$ such that $f(\bA^{\dim Y}\setminus\Sing(f))=Y$ and asked whether one can omit the properness assumption.
Theorem \ref{theorem:surjective} gives an affirmative answer if the dimension of an affine space is ignored.
Motivated by Forstneri\v{c}'s result, Arzhantsev \cite[Proposition 2]{Arzhantsevb} showed that every \emph{very flexible} variety can be realized as the image of an affine space and asked to find a sufficient condition to be such an image \cite[Problem 1]{Arzhantsevb}.
Since every very flexible variety is smooth and subelliptic \cite[Corollary 1.11]{Arzhantsev2013}, Theorem \ref{theorem:surjective} generalizes Arzhantsev's result and gives an answer to his problem.
In \cite{Kusakabe2021}, we proved that every smooth proper locally flexible variety admits a surjective \emph{holomorphic submersion} from an affine space.

Since the complement of a Zariski closed subset of codimension at least two in a smooth locally flexible variety is locally flexible \cite{Flenner2016}, the following corollary holds.
Note that it has already been proved for smooth flexible quasi-affine varieties by Arzhantsev \cite[Theorem 1]{Arzhantsevb}.

\begin{corollary}
    Let $Y$ be a smooth locally flexible variety and $U\subset Y$ be a Zariski open subset.
    If the complement $Y\setminus U$ is of codimension at least two, then $U$ is the image of a morphism from an affine space.
\end{corollary}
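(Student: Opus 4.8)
The plan is to reduce the statement entirely to Theorem~\ref{theorem:surjective} applied to $U$ itself, so the whole argument is a short chain of reductions. First I would record the easy structural facts about $U$: since $Y\setminus U$ has codimension at least two it is in particular a proper closed subset, so $U$ is a nonempty Zariski open subset of the smooth irreducible variety $Y$, hence itself a smooth irreducible variety with $\dim U=\dim Y$. Thus $U$ qualifies as a ``variety'' in the sense used throughout the paper, and the target affine space will be $\bA^{\dim U+1}=\bA^{\dim Y+1}$.

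The crucial step is to upgrade $U$ from merely smooth to smooth \emph{subelliptic}, and this is exactly where the codimension hypothesis enters. I would invoke the result of \cite{Flenner2016} recalled in the sentence preceding the corollary: the complement of a Zariski closed subset of codimension at least two in a smooth locally flexible variety is again locally flexible. Since $Y$ is smooth locally flexible by hypothesis and $Y\setminus U$ is Zariski closed of codimension at least two, it follows that $U=Y\setminus(Y\setminus U)$ is locally flexible. Because smooth locally flexible varieties satisfy $\Ell_1$ and are therefore subelliptic (as noted in the discussion following Definition~\ref{definition:Ell_1}), $U$ is a smooth subelliptic variety.

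Having arranged this, I would simply apply Theorem~\ref{theorem:surjective} to $U$: there exists a surjective morphism $f:\bA^{\dim U+1}\to U$, that is $f:\bA^{\dim Y+1}\to U$. In particular $U$ is the image of a morphism from an affine space, which is all that is claimed (the finer statement $f(\bA^{\dim Y+1}\setminus\Sing(f))=U$ comes along for free but is not needed here).

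I do not expect a genuine obstacle in this argument: all the content is already packaged in Theorem~\ref{theorem:surjective} and in the descent statement \cite{Flenner2016}. The only point requiring care is the verification that local flexibility (hence subellipticity) really descends to $U$, for which the codimension-at-least-two hypothesis is essential --- without it the complement of the removed set could fail to be locally flexible, and the reduction to Theorem~\ref{theorem:surjective} would break down.
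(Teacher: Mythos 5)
Your proposal is correct and is essentially identical to the paper's argument: the paper derives this corollary exactly by citing the Flenner--Kaliman--Zaidenberg theorem \cite{Flenner2016} (recalled in Example \ref{example:flexibility}) to conclude that $U$ is locally flexible, hence smooth subelliptic (via Theorem \ref{theorem:localization}), and then applying Theorem \ref{theorem:surjective} to $U$. The only cosmetic difference is that the paper attributes subellipticity of smooth locally flexible varieties to Gromov's localization theorem rather than to the discussion of $\Ell_1$, but this is the same fact.
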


In the case of an affine space, if a Zariski closed subset $Z\subsetneq\bA^n$ is not of codimension at least two, the complement $\bA^n\setminus Z$ admits a nonconstant invertible regular function.
Thus we obtain the following characterization of open images of morphisms between affine spaces.
This of course follows also from Arzhantsev's result \cite[Theorem 1]{Arzhantsevb}.

\begin{corollary}
    For a Zariski open subset $U\subset\bA^n$, the following are equivalent:
    \begin{enumerate}
        \item $U$ is the image of a morphism from an affine space.
        \item The complement $\bA^n\setminus U$ is of codimension at least two.
    \end{enumerate}
\end{corollary}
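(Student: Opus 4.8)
The plan is to prove the two implications separately: the direction $(2)\Rightarrow(1)$ is an immediate application of the preceding corollary, while $(1)\Rightarrow(2)$ will be handled by contraposition through an elementary argument about invertible regular functions.

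For $(2)\Rightarrow(1)$, I would simply observe that $\bA^n$ is itself a smooth locally flexible variety (indeed flexible, being covered by affine spaces in the trivial sense). Hence the corollary above applies verbatim with $Y=\bA^n$: if $\bA^n\setminus U$ is of codimension at least two, then $U$ is the image of a morphism from an affine space. There is nothing further to do here.

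For the converse $(1)\Rightarrow(2)$, I would argue the contrapositive. Assume $U$ is a nonempty proper open subset (the cases $U=\emptyset$ and $U=\bA^n$ being trivial) and that $Z:=\bA^n\setminus U$ is \emph{not} of codimension at least two. Then $Z$ is a proper closed subset with an irreducible component of codimension one. Since the coordinate ring $\bC[x_1,\dots,x_n]$ is a unique factorization domain, this component is a principal divisor $V(p)$ for some nonconstant irreducible $p$. As $V(p)\subseteq Z$, we have $U\subseteq\bA^n\setminus V(p)$, so $p$ is nowhere vanishing on $U$; consequently both $p|_U$ and $1/p|_U$ lie in $\cO(U)$, exhibiting $p|_U$ as a nonconstant \emph{invertible} regular function on $U$, exactly as anticipated in the remark preceding the statement. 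Now suppose toward a contradiction that $U$ is the image of a morphism from an affine space, i.e. there is a surjective morphism $g:\bA^m\to U$. Pulling back yields $g^{*}(p|_U)\in\cO(\bA^m)=\bC[y_1,\dots,y_m]$ with two-sided inverse $g^{*}(1/p|_U)$, so $g^{*}(p|_U)$ is a unit of the polynomial ring. But the units of $\bC[y_1,\dots,y_m]$ are precisely the nonzero constants, whence $g^{*}(p|_U)$ is constant; since $g$ is surjective onto $U$, the function $p|_U$ is then constant, contradicting the previous sentence. Therefore $Z$ is of codimension at least two.

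I expect the only genuinely substantive input to be the reduction, in the third paragraph, from ``$Z$ not of codimension $\ge 2$'' to the existence of a nonconstant unit on $U$: this is where factoriality of $\bC[x_1,\dots,x_n]$ is used to realize a codimension-one component as a single polynomial $p$. The point that must be checked with care is that $p|_U$ is invertible on \emph{all} of $U$, and not merely away from the chosen component; this holds because $U\subseteq\bA^n\setminus V(p)$, so $p$, being invertible on the larger set $\bA^n\setminus V(p)$, is a fortiori invertible on $U$. Everything else is either the corollary above or the standard computation of units in a polynomial ring over $\bC$.
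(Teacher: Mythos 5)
Your proof is correct and follows essentially the same route as the paper: the direction $(2)\Rightarrow(1)$ is the preceding corollary applied to $Y=\bA^n$, and $(1)\Rightarrow(2)$ is exactly the paper's observation that a codimension-one complement yields a nonconstant invertible regular function on $U$, which cannot be hit by a morphism from an affine space since units of $\bC[y_1,\dots,y_m]$ are constants. Your write-up merely makes explicit the details (factoriality of $\bC[x_1,\dots,x_n]$, density of $U$) that the paper leaves implicit.
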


One of the main features of Oka manifolds is that the interpolation theorem holds for holomorphic maps from Stein spaces.
More precisely, for any closed analytic subset $Z\subset X$ of a Stein space $X$ and any holomorphic map $f:Z\to Y$ to an Oka manifold $Y$, there exists a holomorphic extension $X\to Y$ of $f$ if it admits a continuous extension $X\to Y$.
On the other hand, L\'{a}russon and Truong proved that most smooth subelliptic varieties (e.g. smooth proper subelliptic varieties) do not have the algebraic version of this interpolation property \cite[Theorem 2]{Larusson2019}.
However, since every smooth subelliptic variety is the image of an affine space by Theorem \ref{theorem:surjective}, we can extend morphisms from finite points of affine varieties by lifting to $\bA^n$.
Furthermore, since morphisms from zero-dimensional subschemes lift along smooth morphisms, we obtain the following jet interpolation theorem.

\begin{corollary}
    \label{corollary:interpolation}
    Let $Y$ be a smooth subelliptic variety, $X$ be an affine variety and $Z\subset X$ be a zero-dimensional subscheme of $X$.
    Then for any morphism $f:Z\to Y$ there exists a morphism $\tilde f:X\to Y$ such that $\tilde f|_Z=f$.
\end{corollary}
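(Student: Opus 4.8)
The plan is to lift $f$ through the surjective morphism furnished by Theorem~\ref{theorem:surjective} and then extend the lift over all of $X$. Concretely, set $N=\dim Y+1$ and let $p\colon\bA^N\to Y$ be the surjective morphism of Theorem~\ref{theorem:surjective}, so that $p(W)=Y$ for the Zariski open set $W=\bA^N\setminus\Sing(p)$; by construction $p|_W\colon W\to Y$ is a \emph{smooth surjective} morphism. I would first lift $f\colon Z\to Y$ to a morphism $g\colon Z\to W$ satisfying $p\circ g=f$, then extend $g$ to a morphism $G\colon X\to\bA^N$, and finally take $\tilde f=p\circ G$.

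For the lifting step, recall that a zero-dimensional subscheme of an affine variety is a finite disjoint union $Z=\coprod_i\Spec\cO_{Z,z_i}$ of spectra of Artinian local $\bC$-algebras, so it suffices to treat each factor separately. Fix such a factor $T=\Spec\cO_{Z,z}$ with closed point $z$, and put $y=f(z)\in Y$. Since $p|_W$ is surjective, I would choose a point $a\in W$ with $p(a)=y$. The closed immersion $\{z\}\hookrightarrow T$ is defined by the maximal ideal of $\cO_{Z,z}$, which is nilpotent because $\cO_{Z,z}$ is Artinian, so it is a nilpotent thickening. As $p|_W$ is smooth, hence formally smooth, the infinitesimal lifting criterion provides a morphism $T\to W$ that lifts $f|_T\colon T\to Y$ along $p|_W$ and sends $z$ to $a$. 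Assembling these lifts over the finitely many factors of $Z$ yields $g\colon Z\to W$ with $p\circ g=f$.

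The extension step is purely formal. Because $X$ is affine with coordinate ring $A=\cO(X)$ and $Z\subset X$ is cut out by an ideal $I\subset A$, the morphism $g\colon Z\to\bA^N$ corresponds to an $N$-tuple of elements of $A/I$. Lifting each of these elements through the surjection $A\twoheadrightarrow A/I$ produces an $N$-tuple in $A$, that is, a morphism $G\colon X\to\bA^N$ with $G|_Z=g$. Setting $\tilde f=p\circ G\colon X\to Y$, I obtain $\tilde f|_Z=p\circ G|_Z=p\circ g=f$, as required.

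The only genuine content lies in the lifting step, which I expect to be the main obstacle. There one must use both halves of Theorem~\ref{theorem:surjective}: surjectivity \emph{onto the smooth locus} is exactly what lets one place the preimage $a$ inside $W$, and the zero-dimensionality of $Z$ is what makes the closed point a nilpotent thickening, so that formal smoothness of $p|_W$ applies. Once the jet has been lifted into the affine space, the total absence of interpolation obstructions for maps into $\bA^N$—in contrast with the negative result of L\'arusson and Truong for maps into $Y$ itself—makes the remaining extension immediate.
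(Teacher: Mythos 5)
Your proposal is correct and takes essentially the same route as the paper, whose (sketched) argument likewise lifts $f$ through the smooth locus of the surjection $\bA^{\dim Y+1}\to Y$ from Theorem \ref{theorem:surjective} via formal smoothness over the nilpotent thickening of the finitely many points of $Z$, extends the lifted map into the affine space, and composes back down. The only detail worth adding is that a zero-dimensional subscheme of $X$ is automatically a \emph{closed} subscheme (an immersion with closed image is a closed immersion), which your extension step implicitly uses.
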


%%%%%%%%%%%%%%%%%%%%%%%%%%%%%%%%%%%%%%%%%%
%
% Sprays and subellipticity
%
%%%%%%%%%%%%%%%%%%%%%%%%%%%%%%%%%%%%%%%%%%

\section{Sprays and subellipticity}
\label{section:subellipticity}

In order to prove Theorem \ref{theorem:surjective}, let us recall the notion of sprays.

\begin{definition}
    \label{definition:ellipticity}
    Let $X$ be a variety and $Y$ be a smooth variety.
    \begin{enumerate}[leftmargin=*]
        \item A \emph{spray} over a morphism $f:X\to Y$ is a triple $(E,p,s)$ where $p:E\to X$ is a vector bundle and $s:E\to Y$ is a morphism such that $s(0_{x})=f(x)$ for each $x\in X$.
        \item A family of sprays $\{(E_j,p_j,s_j)\}_{j=1}^n$ over a morphism $f:X\to Y$ is \emph{dominating} if  the differentials $ds_j:TE_j\to TY$ $(j=1,\ldots,n)$ satisfy
        \[
            \sum_{j=1}^n ds_j\left(T_{0_x}p_j^{-1}(x)\right)=T_{f(x)}Y
        \]
        for each $x\in X$.
        \item $Y$ is \emph{elliptic} (resp. \emph{subelliptic}) if there exists a dominating spray (resp. a dominating family of sprays) over the identity morphism $\id_Y$ of $Y$.
    \end{enumerate}
\end{definition}

By Serre's theorem A \cite[p.\,237, Th\'{e}or\`{e}me 2]{Serre1955a}, a smooth variety $Y$ satisfies $\Ell_1$ (Definition \ref{definition:Ell_1}) if and only if for any affine variety $X$ and any morphism $f:X\to Y$ there exists a dominating spray over $f$.
The following Gromov's localization theorem establishes the equivalence between $\Ell_1$ and subellipticity (see \cite[Proposition 8.8.11]{Forstneric2017}).

\begin{theorem}[{Gromov \cite[3.5.B]{Gromov1989}, cf. \cite[Proposition 1.4]{Kaliman2018}}]
    \label{theorem:localization}
    Let $Y$ be a smooth variety, $U\subset Y$ be a Zariski open subset and $(E,p,s)$ be a spray over the inclusion morphism $U\hookrightarrow Y$.
    Then for each $y\in U$ there exists a spray $(\widetilde E,\tilde p,\tilde s)$ over the identity morphism $\id_Y$ such that $ds(T_{0_y}p^{-1}(y))=d\tilde s(T_{0_y}\tilde p^{-1}(y))$.
    In particular, every smooth variety covered by subelliptic Zariski open subvarieties is subelliptic.
\end{theorem}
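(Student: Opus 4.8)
The plan is to separate the statement into two independent problems: matching the vertical differential of the local spray $s$ at the single point $0_y$, and producing a spray defined over \emph{all} of $Y$ rather than only over $U$. The second is the real content, because $s$ lives only on $E|_U$ and the desired spray must satisfy $\tilde s(0_z)=z$ for every $z\in Y$, including $z$ in the complement $D:=Y\setminus U$. My approach is to \emph{damp} $s$: to precompose it with a fiberwise scaling by a regular function that vanishes to high order along $D$ but is nonzero at $y$, forcing $s$ to degenerate to the trivial spray along $D$ so that the composite extends across $D$. A preliminary linear-algebra remark makes the required equality of images essentially free: if $A:=ds|_{T_{0_y}p^{-1}(y)}\colon E_y\to T_yY$ denotes the vertical differential and we rescale the fiber by a nonzero scalar $c$, then $A$ is replaced by $cA$, which has the same image $V_y:=A(E_y)$. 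Thus it suffices to realize $cA$ for \emph{some} nonzero $c$, and one need not — and indeed, algebraically, cannot — arrange the damping factor to be identically $1$ near $y$, since a nonconstant regular function is nowhere locally constant.

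Concretely, I would choose an affine open $V\ni y$ together with a regular function $f$ on $V$ vanishing on $D\cap V$ and with $f(y)\neq 0$, trivialize $E$ near $y$, and set $\tilde s(x,v):=s\bigl(x,f(x)^{N}v\bigr)$. Because $s$ is a morphism on $E|_{U}$, in local coordinates its expansion in the fiber variable $v$ is a polynomial of bounded degree whose coefficients have poles of bounded order along $\{f=0\}$; taking $N$ large kills these poles, so $\tilde s$ extends regularly across $D\cap V$. Every monomial carrying a positive power of $v$ then contains a positive power of $f$ and so vanishes on $\{f=0\}$, leaving $\tilde s(x,0)=s(x,0)=x$ there, as required for a spray over $\id_Y$. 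By the remark above, the vertical differential at $0_y$ is $f(y)^{N}A$, whose image is exactly $V_y$.

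The main obstacle is promoting this local construction to a spray over the whole of $Y$, since the damped spray is trivial only along $D$, not on a Zariski-open set, so it cannot simply be glued to the trivial spray on $Y\setminus V$. I would resolve this by a sheaf-theoretic extension argument: using Serre's Theorem~A to realize the damping globally by a section of a suitable line bundle when $D$ is a divisor, and, when $D$ has codimension at least two, by extending the bundle across $D$ and using that a morphism which is the identity along a closed subset extends across it. One must also check that the extended morphism, which a priori lands in an ambient affine chart, factors through $Y$; this follows because it agrees with $s$ on the dense open $V\setminus D$ and is the inclusion on $D\cap V$. The upshot is a genuine spray $(\widetilde E,\tilde p,\tilde s)$ over $\id_Y$ with $d\tilde s\bigl(T_{0_y}\tilde p^{-1}(y)\bigr)=V_y=ds\bigl(T_{0_y}p^{-1}(y)\bigr)$.

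For the concluding assertion, write $Y=\bigcup_i U_i$ with each $U_i$ subelliptic, the union being finite by quasi-compactness. Regarding a dominating family over $\id_{U_i}$ as a family of sprays over the inclusion $U_i\hookrightarrow Y$ and applying the localization statement at a point $y\in U_i$ to each member, I obtain finitely many sprays over $\id_Y$ whose vertical differentials at $0_y$ sum to $T_yU_i=T_yY$; hence this fixed finite family dominates at $y$. Since the set of points where a given finite family dominates is Zariski-open — it is the complement of the support of the cokernel of a morphism of vector bundles over $Y$ — and since $Y$ is quasi-compact, finitely many such families taken together dominate at every point of $Y$. Therefore $Y$ is subelliptic.
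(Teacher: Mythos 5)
First, a point of reference: the paper does not prove Theorem \ref{theorem:localization} at all --- it is quoted from Gromov and from Kaliman--Kutzschebauch--Truong --- so your proposal can only be measured against the standard proof in those references (see also \cite[Proposition 8.8.11]{Forstneric2017}). Your strategy is exactly that standard one: damp the fiber variable by a high power of a function vanishing on $D=Y\setminus U$, so that the spray degenerates to the trivial spray along $D$ and hence extends across it. Your preliminary scaling remark, the local pole-killing computation, and the final deduction of subellipticity (openness of the domination locus as the complement of the support of a cokernel, plus quasi-compactness) are all correct and are the right ingredients.

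The genuine gap is the globalization step. The dichotomy ``when $D$ is a divisor / when $D$ has codimension at least two'' is both unnecessary and, in its second branch, unsound. ``A morphism which is the identity along a closed subset extends across it'' is not a valid principle: extension across $D$ is precisely what has to be proved, and the \emph{undamped} spray $s$ itself --- which is the identity on the zero section over $U$ but in general does not extend across $D$ even when $D$ has codimension $\geq 2$ --- is a counterexample to any such principle; moreover, extending a vector bundle across a codimension-two set generally yields only a reflexive sheaf, not a bundle, and yields no map. The correct argument needs no case distinction. Since only the vertical differential at the single point $0_y$ matters, first shrink $U$ to an affine neighborhood of $y$ over which $E$ is trivial (this only enlarges $D$ and changes nothing at $0_y$). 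Then, in the quasi-projective setting of the cited sources, choose a hypersurface containing the closure of $D$ and missing $y$ --- such a hypersurface exists \emph{whatever} the codimension of $D$ is, which is why no dichotomy arises --- and let $\sigma\in H^0(Y,L)$ be the corresponding section of the associated line bundle. Set $\widetilde E=(L^{-N})^{\oplus n}$, a bundle over all of $Y$, and $\tilde s=s\circ\mu_{\sigma^N}$, where $\mu_{\sigma^N}:\widetilde E|_U\to E\cong U\times\bA^n$ is fiberwise multiplication by $\sigma^N$; your pole-killing computation, run in finitely many charts trivializing $L$ with a single large $N$, shows $\tilde s$ extends regularly with $\tilde s=\tilde p$ over $D$, and the chartwise extensions glue because they agree on dense opens and $Y$ is separated. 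Since $\mu_{\sigma^N}$ is an isomorphism on the fiber over $y$, the images of the vertical differentials agree. Finally, one technical point you wave at but do not resolve: writing $s$ as ``a polynomial in $v$ with coefficients having poles'' requires coordinates on the \emph{target}, whose image is not contained in one chart; this is handled by working with homogeneous coordinates of an ambient projective space and normalizing the $v$-constant term to be a pole-free representation of the inclusion, after which your computation goes through.
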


An algebraic $\bG_a$-action $s:Y\times\bG_a\to Y$ on a smooth variety $Y$ is an example of a spray over the identity morphism $\id_Y$.
By using sprays of this type, subelliptic varieties called flexible varieties are defined as follows.

\begin{definition}[{cf. \cite{Arzhantsev2013,Arzhantsev2013a,Kaliman2018}}]
    \phantomsection\label{definition:flexibility}
    \begin{enumerate}[leftmargin=*]
        \item A smooth variety $Y$ is \emph{flexible} if for each $y\in Y$ the tangent space $T_{y}Y$ is spanned by the tangent vectors to the orbits of $\bG_a$-actions on $Y$ through $y$.
        \item A smooth variety is \emph{locally flexible} if it is covered by flexible \emph{quasi-affine} Zariski open subvarieties.
    \end{enumerate}
\end{definition}

Note that Theorem \ref{theorem:localization} yields subellipticity of smooth locally flexible varieties.

\begin{example}
    \phantomsection\label{example:flexibility}
    \begin{enumerate}[leftmargin=*]
        \item \label{item:A-covered}
        The affine spaces $\bA^n$ $(n\in\bN)$ are obviously flexible.
        Therefore, every variety covered by affine spaces is locally flexible.
        Such a variety is said to be of class $\sA_0$ \cite[\S 6.4]{Forstneric2017} or $A$-covered \cite{Arzhantsev2014}.
        \item \label{item:complement}
        By the Flenner--Kaliman--Zaidenberg theorem \cite[Theorem 1.1]{Flenner2016}, for any smooth locally flexible variety $Y$ and any Zariski closed subset $Z\subset Y$ of codimension at least two, the complement $Y\setminus Z$ is also locally flexible.
        \item \label{item:toric}
        A toric variety is said to be \emph{nondegenerate} if it has no torus factor.
        Arzhantsev, Zaidenberg and Kuyumzhiyan proved that the smooth locus of a nondegenerate affine toric variety is flexible \cite[Theorem 0.2]{Arzhantsev2012}.
        By using this result and (\ref{item:complement}), L\'{a}russon and Truong showed that every smooth nondegenerate toric variety is locally flexible \cite[Theorem 3]{Larusson2019}.
        \item \label{item:homogeneous}
        Let $G$ be a connected linear algebraic group and $H$ be a closed subgroup of $G$.
        Then the homogeneous space $G/H$ is flexible if and only if there is no nonconstant morphism $G/H\to\bA^1\setminus\{0\}$ \cite[Proof of Theorem C]{Arzhantsevb} (see also \cite[Corollary 1.11]{Arzhantsev2013}).
    \end{enumerate}
\end{example}

\begin{remark}
    Arzhantsev \cite{Arzhantsevb} proved that smooth locally flexible varieties in Example \ref{example:flexibility} (\ref{item:A-covered}), (\ref{item:toric}) and (\ref{item:homogeneous}) admit surjective morphisms from affine spaces.
    At present, it is not known whether there exists a smooth subelliptic variety which is not locally flexible.
    For example, the blowup of a smooth locally flexible variety along a smooth center is subelliptic \cite{Kaliman2018,Larusson2017} but not known to be locally flexible (see also \cite[Corollary 1.5]{Kusakabe2020a}).
\end{remark}

In the rest of this section, we recall a few facts about composed sprays and pullback sprays defined as follows.

\begin{definition}[{cf. \cite[Definition 6.3.5]{Forstneric2017}}]
    \label{definition:composed}
    Let $X$ be a variety and $Y$ be a smooth variety.
    \begin{enumerate}[leftmargin=*]
    \item For a family of sprays $\{(E_{j},p_{j},s_{j})\}_{j=1}^{n}$ over the identity morphism $\id_{Y}$, the \emph{composed spray}\footnote{In general, the projection $p_{1}*\cdots*p_{n}:E_{1}*\cdots*E_{n}\to Y$ has no canonical vector bundle structure, but it admits a well-defined zero section.} $(E_{1}*\cdots*E_{n},p_{1}*\cdots*p_{n},s_{1}*\cdots*s_{n})$ is defined by
    \begin{align*}
        E_{1}*\cdots*E_{n}&=\left\{(e_{1},\ldots,e_{n})\in\prod_{j=1}^{n}E_{j}:s_{j}(e_{j})=p_{j+1}(e_{j+1}),\ j=1,\ldots,n-1\right\},\\
        (p_{1}*\cdots&*p_{n})(e_{1},\ldots,e_{n})=p_{1}(e_{1}),\quad (s_{1}*\cdots*s_{n})(e_{1},\ldots,e_{n})=s_{n}(e_{n}).
    \end{align*}
    \item For a morphism $f:X\to Y$ and a composed spray $(E,p,s)$ over the identity morphism $\id_Y$, the \emph{pullback spray} $(f^{*}E,f^{*}p,f^{*}s)$ over $f:X\to Y$ consists of the pullback bundle $f^{*}p:f^{*}E\to X$ of $p:E\to Y$ and the composition $f^{*}s:f^{*}E\to Y$ of the induced morphism $f^{*}E\to E$ and $s:E\to Y$.
    \end{enumerate}
\end{definition}

By the following lemma, subellipticity is equivalent to the existence of a \emph{dominating} composed spray.

\begin{lemma}[{cf. \cite[Lemma 6.3.6]{Forstneric2017}}]
    \label{lemma:composed}
    Let $Y$ be a smooth variety.
    Then a family of sprays $\{(E_j,p_j,s_j)\}_{j=1}^n$ over the identity morphism $\id_{Y}$ is dominating if and only if the morphism
    \[
        (p_{1}*\cdots*p_{n},s_{1}*\cdots*s_{n}):E_{1}*\cdots*E_{n}\to Y\times Y
    \]
    is smooth near the zero section $\{(0_{y},\cdots,0_{y})\in E_{1}*\cdots*E_{n}:y\in Y\}$.
\end{lemma}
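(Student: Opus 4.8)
The plan is to reduce the statement to a pointwise computation of the differential of $\Phi:=(p_{1}*\cdots*p_{n},s_{1}*\cdots*s_{n})$ at each point $\mathbf 0_y:=(0_y,\ldots,0_y)$ of the zero section, and to identify surjectivity of this differential with the dominating condition. First I would use that the non-smooth locus of a morphism is Zariski closed, so $\Phi$ is smooth near the zero section if and only if it is smooth at every $\mathbf 0_y$; this turns ``near'' into a condition at each individual $y\in Y$, matching the pointwise nature of the dominating condition. Next I would record that the source $E_{1}*\cdots*E_{n}$ is smooth: realizing it as the iterated fibre product $(\cdots((E_{1}\times_{Y}E_{2})\times_{Y}E_{3})\cdots)\times_{Y}E_{n}$, where the structure maps to $Y$ are the various $s_j$ and $p_{j+1}$, each projection is a base change of a vector bundle projection $p_{j+1}:E_{j+1}\to Y$, hence smooth; since $E_{1}$ is smooth over the smooth variety $Y$, induction shows $E_{1}*\cdots*E_{n}$ is smooth. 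Consequently, for the morphism $\Phi$ between smooth varieties, smoothness at $\mathbf 0_y$ is equivalent to surjectivity of the differential $d\Phi_{\mathbf 0_y}:T_{\mathbf 0_y}(E_{1}*\cdots*E_{n})\to T_yY\oplus T_yY$.

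The heart of the argument is the computation of this differential. At $0_y\in E_j$ the vector bundle structure gives a splitting $T_{0_y}E_j=H_j\oplus V_j$, where $H_j$ is the image of the zero section and $V_j=T_{0_y}p_j^{-1}(y)$ is the vertical (fibre) part. Since $p_j\circ 0=\id_Y$ and $s_j\circ 0=\id_Y$, I would identify $H_j\cong T_yY$ via $dp_j$ and write a tangent vector $v_j\in T_{0_y}E_j$ as $(w_j,\xi_j)$ with $w_j\in T_yY$ and $\xi_j\in V_j$; then $dp_j(v_j)=w_j$ and $ds_j(v_j)=w_j+\sigma_j(\xi_j)$, where $\sigma_j:=ds_j|_{V_j}:V_j\to T_yY$ is exactly the map whose images appear in the dominating condition. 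A tangent vector to $E_{1}*\cdots*E_{n}$ at $\mathbf 0_y$ is a tuple $(v_1,\ldots,v_n)$ satisfying the linearised gluing constraints $ds_j(v_j)=dp_{j+1}(v_{j+1})$, which read $w_{j+1}=w_j+\sigma_j(\xi_j)$ for $j=1,\ldots,n-1$; thus $w_1\in T_yY$ and $\xi_1,\ldots,\xi_n$ are free, while $w_2,\ldots,w_n$ are determined recursively.

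Finally I would telescope. Since $d\Phi_{\mathbf 0_y}(v_1,\ldots,v_n)=(dp_1(v_1),ds_n(v_n))=(w_1,\,w_n+\sigma_n(\xi_n))$ and the constraints give $w_n+\sigma_n(\xi_n)=w_1+\sum_{j=1}^{n}\sigma_j(\xi_j)$, the image of $d\Phi_{\mathbf 0_y}$ is
\[
\left\{\left(w_1,\ w_1+\textstyle\sum_{j=1}^{n}\sigma_j(\xi_j)\right):w_1\in T_yY,\ \xi_j\in V_j\right\}.
\]
This equals $T_yY\oplus T_yY$ if and only if $\sum_{j=1}^{n}\sigma_j(V_j)=\sum_{j=1}^{n}ds_j(T_{0_y}p_j^{-1}(y))=T_yY$, which is precisely the dominating condition at $y$. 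Combined with the reduction of the first paragraph, this yields the lemma. I expect the main obstacle to be the bookkeeping in this differential computation: correctly splitting the horizontal and vertical directions, tracking how the gluing constraints propagate the horizontal components, and verifying that the telescoped sum recovers exactly the subspace $\sum_j ds_j(T_{0_y}p_j^{-1}(y))$ rather than some smaller span.
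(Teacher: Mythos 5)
Your proof is correct. Note that the paper does not actually prove this lemma---it is quoted from \cite[Lemma 6.3.6]{Forstneric2017}---and your argument (reduction to surjectivity of $d\Phi$ at points of the zero section via the closedness of the non-smooth locus and the characteristic-zero Jacobian criterion, followed by the horizontal/vertical splitting and the telescoping computation of the image) is essentially the standard proof given in that reference, so there is nothing to add.
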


By repeatedly using Serre's theorem A \cite{Serre1955a}, we can obtain the following lemma.

\begin{lemma}[{cf. \cite[Proof of Lemma 3.6]{Forstneric2006a}}]
    \label{lemma:Serre}
    Let $Y$ be a smooth variety and $(E,p,s)$ be the composed spray of a family of sprays over the identity morphism $\id_{Y}$.
    Then for any morphism $f:X\to Y$ from an affine variety $X$ there exist $n\in\bN$ and a surjective smooth morphism $\varphi:X\times\bA^n\to f^*E$ such that $f^{*}p\circ\varphi:X\times\bA^{n}\to X$ is the projection and the image $\varphi(X\times\{0\})$ is the zero section of $f^{*}E$.
\end{lemma}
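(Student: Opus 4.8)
The plan is to argue by induction on the number $r$ of sprays in the family, peeling off one factor of the composed spray at a time. The essential point is that, as the footnote to Definition \ref{definition:composed} records, the total space $E=E_1*\cdots*E_r$ carries no global vector bundle structure, so Serre's theorem A cannot be applied to $f^*E$ in a single stroke. Writing $E^{(k)}:=E_1*\cdots*E_k$ with projection $p^{(k)}$ and end-evaluation $s^{(k)}:=s_1*\cdots*s_k$, I would instead observe that the forgetful map $f^*E^{(k+1)}\to f^*E^{(k)}$ is an honest vector bundle: it is the pullback $\sigma_k^*E_{k+1}$ of $E_{k+1}$ along the end-evaluation morphism $\sigma_k:f^*E^{(k)}\to Y$ obtained by composing the projection $f^*E^{(k)}\to E^{(k)}$ with $s^{(k)}$. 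This is what makes a stage-by-stage application of Serre's theorem A possible.

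The inductive claim is: for every $k$ there exist $N_k\in\bN$ and a surjective smooth morphism $\varphi_k:X\times\bA^{N_k}\to f^*E^{(k)}$ with $f^*p^{(k)}\circ\varphi_k=\pr_X$ and with $\varphi_k(X\times\{0\})$ equal to the zero section. For the base case $k=1$, since $X$ is affine the locally free sheaf of sections of $f^*E_1$ is globally generated by Serre's theorem A, and a generating tuple of $N_1$ global sections defines the evaluation map, a fiberwise-linear surjection $\varphi_1:X\times\bA^{N_1}\to f^*E_1$ over $X$; such a surjection of vector bundles is smooth, respects the projection to $X$, and carries the zero of $\bA^{N_1}$ to the zero section.

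For the inductive step, given $\varphi_k$ I would form the morphism $g_k:=\sigma_k\circ\varphi_k:X\times\bA^{N_k}\to Y$. As $X\times\bA^{N_k}$ is affine, Serre's theorem A again furnishes a fiberwise-linear surjection $\psi_{k+1}:(X\times\bA^{N_k})\times\bA^{m_{k+1}}\to g_k^*E_{k+1}$ over $X\times\bA^{N_k}$. Because $g_k=\sigma_k\circ\varphi_k$, there is a canonical identification $g_k^*E_{k+1}=\varphi_k^*(\sigma_k^*E_{k+1})=\varphi_k^*(f^*E^{(k+1)})$, which exhibits $g_k^*E_{k+1}$ as the top left corner of a Cartesian square whose bottom arrow is $\varphi_k$ and whose top arrow $\Phi_k:g_k^*E_{k+1}\to f^*E^{(k+1)}$ is thus a base change of $\varphi_k$ along the bundle projection $f^*E^{(k+1)}\to f^*E^{(k)}$. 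Since both surjectivity and smoothness are stable under base change, $\Phi_k$ is surjective and smooth, so setting $\varphi_{k+1}:=\Phi_k\circ\psi_{k+1}$ and $N_{k+1}:=N_k+m_{k+1}$ yields a surjective smooth morphism onto $f^*E^{(k+1)}$. The conditions on the projection to $X$ and on the zero section propagate automatically, because every map in sight is compatible with the projection to $X$ and is linear carrying zero to zero on the relevant fibers. Taking $k=r$ and renaming $N_r$ as $n$ completes the proof.

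The main obstacle I anticipate is structural rather than computational: correctly recognizing $f^*E^{(k+1)}\to f^*E^{(k)}$ as the vector bundle $\sigma_k^*E_{k+1}$, so that Serre's theorem A is applicable one factor at a time despite $E$ having no global bundle structure, and then checking that surjectivity together with smoothness survive both the base change producing $\Phi_k$ and the subsequent composition with $\psi_{k+1}$. Throughout, one must keep in view that each intermediate source $X\times\bA^{N_k}$ is affine, since it is precisely this that licenses the repeated invocation of Serre's theorem A.
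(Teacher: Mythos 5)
Your proof is correct and follows exactly the route the paper intends: the paper gives no detailed proof but indicates ``repeatedly using Serre's theorem A'' (citing Forstneri\v{c}'s Lemma 3.6), which is precisely your induction along the tower of vector bundles $f^*E^{(k+1)}=\sigma_k^*E_{k+1}\to f^*E^{(k)}$, trivializing one stage at a time by a fiberwise-linear surjection from a trivial bundle and using stability of smoothness and surjectivity under base change and composition. Your identification of the forgetful map as the pullback bundle $\sigma_k^*E_{k+1}$ is exactly the structural point that makes the iteration work.
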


%%%%%%%%%%%%%%%%%%%%%%%%%%%%%%%%%%%%%%%%%%
%
% Proof of Theorem \ref{theorem:surjective}
%
%%%%%%%%%%%%%%%%%%%%%%%%%%%%%%%%%%%%%%%%%%

\section{Proof of Theorem \ref{theorem:surjective}}
\label{section:surjective}

We first prove the existence of a morphism $h:\bA^{N}\to Y$ such that $h(\bA^{N}\setminus\Sing(h))=Y$ for large $N\gg 0$ and then reduce the dimension of an affine space.

\begin{proof}[Proof of Theorem \ref{theorem:surjective}]
    Let $Y$ be a smooth subelliptic variety.
    Since $Y$ satisfies $\Ell_1$ (Definition \ref{definition:Ell_1}), there exist $n\in\bN$ and a morphism $g:\bA^n\to Y$ which is smooth at $0\in\bA^n$.
    Since a smooth morphism is open and $0\in\bA^n\setminus\Sing(g)\neq\emptyset$, the image $U=g(\bA^n\setminus\Sing(g))\subset Y$ is a dense Zariski open subset.
    Let $(E,p,s)$ be the composed spray of a dominating family of sprays over the identity morphism $\id_Y$.
    By Lemma \ref{lemma:composed}, the singular locus $\Sing(p,s)$ of the morphism $(p,s):E\to Y\times Y$ is contained in the complement $E\setminus Z$ of the zero section $Z$ of $E$.
    Note that the image $(p,s)(Z)$ is the diagonal subset of $Y\times Y$.

    Take a point $y\in Y\setminus U$ arbitrarily.
    Since the morphism $(p,s):E\to Y\times Y$ is smooth at the zero $0_y\in E$ over $y$,
    \[
        \dim_{0_y}(p,s)^{-1}(Y\times\{y\})=\dim E-\dim Y.
    \]
    If we assume that
    \[
        (p,s)^{-1}(Y\times\{y\})\subset p^{-1}(Y\setminus U)\cup\Sing(p,s)\subset E,
    \]
    then
    \begin{align*}
        \dim_{0_y}(p,s)^{-1}(Y\times\{y\})&=\dim_{0_y}\left((p,s)|_{p^{-1}(Y\setminus U)}\right)^{-1}((Y\setminus U)\times\{y\}) \\
        &<\dim E-\dim Y
    \end{align*}
    holds since $(p,s)|_{p^{-1}(Y\setminus U)}:p^{-1}(Y\setminus U)\to (Y\setminus U)\times Y$ is also smooth at $0_y$, but this contradicts the first equality.
    Therefore
    \[
        y\in s\left(p^{-1}(U)\setminus\Sing(p,s)\right)\subset s\left(p^{-1}(U)\setminus\Sing(s)\right)
    \]
    holds and it follows that $Y\setminus U\subset s(p^{-1}(U)\setminus\Sing(s))$.

    Let us consider the pullback spray $(g^*E,g^*p,g^*s)$ of $(E,p,s)$ by the above morphism $g:\bA^n\to Y$.
    By Lemma \ref{lemma:Serre}, there exists a surjective smooth morphism $\varphi:\bA^n\times\bA^{m}\to g^{*}E$ over $\bA^n$ which preserves the zero section.
    Set $h=g^{*}s\circ\varphi:\bA^{n+m}\to Y$.
    Then it follows that
    \begin{align*}
        Y=U\cup(Y\setminus U)&\subset g\left(\bA^n\setminus\Sing(g)\right)\cup s\left(p^{-1}(U)\setminus\Sing(s)\right) \\
        &\subset h((\bA^n\times\{0\})\setminus\Sing(h))\cup h(((\bA^n\setminus\Sing(g))\times\bA^{m})\setminus\Sing(h)) \\
        &\subset h(\bA^{n+m}\setminus\Sing(h)).
    \end{align*}

    Set $d=\dim Y$.
    For each $y\in Y$, there exists a $d$-dimensional affine subspace $H_y\subset\bA^{n+m}$ such that $y\in h(H_y\setminus\Sing(h|_{H_y}))$.
    Since $Y$ is quasi-compact, there exist finitely many points $y_1,\ldots, y_l\in Y$ such that
    \[
        Y=\bigcup_{j=1}^l h\left(H_{y_j}\setminus\Sing\left(h|_{H_{y_j}}\right)\right).
    \]
    Choose pairwise distinct points $x_1,\ldots, x_l\in\bA^1$ and take a morphism $\psi:\bA^{d+1}\to\bA^{n+m}$ so that for each $j=1,\ldots,l$ the restriction $\psi|_{\bA^{d}\times\{x_j\}}$ is an affine transformation whose image is $H_{y_{j}}$.
    Then the morphism $f=h\circ\psi:\bA^{d+1}\to Y$ satisfies $f(\bA^{d+1}\setminus\Sing(f))=Y$.
\end{proof}

\begin{remark}
    As mentioned in the introduction, Forstneri\v{c} proved that every smooth proper subelliptic variety admits a surjective morphism from an affine space \cite[Theorem 1.6]{Forstneric2017a}.
    In his proof, an analytic tool called the \emph{homotopy Runge approximation theorem} (equivalent to subellipticity \cite{Larusson2019}) was used.
    Note that our proof of Theorem \ref{theorem:surjective} given above is purely algebro-geometric.
    Thus our main result also holds for smooth subelliptic varieties over more general fields.
    Elliptic algebraic geometry over general fields will be studied in future work.
\end{remark}

%%%%%%%%%%%%%%%%%%%%%%%%%%%%%%%%%%%%%%%%%%
%
% Acknowledgments
%
%%%%%%%%%%%%%%%%%%%%%%%%%%%%%%%%%%%%%%%%%%

\section*{Acknowledgments}

This work was supported by JSPS KAKENHI Grant Number JP21K20324.

%%%%%%%%%%%%%%%%%%%%%%%%%%%%%%%%%%%%%%%%%%
%
% References
%
%%%%%%%%%%%%%%%%%%%%%%%%%%%%%%%%%%%%%%%%%%

% \bibliographystyle{abbrv}
% \bibliography{references}

\end{document}